\theoremstyle{plain}
\newtheorem{theorem}{Theorem}[section]
\newtheorem{lemma}[theorem]{Lemma}
\newtheorem{claim}[theorem]{Claim}
\newtheorem{proposition}[theorem]{Proposition}
\theoremstyle{definition}
\newtheorem{remark}[theorem]{Remark}
\theoremstyle{definition}
\def\fnum{equation}
\numberwithin{equation}{section}
\begin{document}
\title
[Rigidity for the equatorial  disk in the ball]
{Area Rigidity for the  equatorial  disk in the ball}

\author{Ezequiel Barbosa}
\address{Universidade Federal de Minas Gerais (UFMG), Caixa Postal 702, 30123-970, Belo Horizonte, MG, Brazil}

\email{ezequiel@mat.ufmg.br}
\author{Celso Viana}
\address{Universidade Federal de Minas Gerais (UFMG), Caixa Postal 702, 30123-970, Belo Horizonte, MG, Brazil}
\email{celso@mat.ufmg.br}

\begin{abstract}
It is proved by Brendle in \cite{B} that the equatorial disk $D^k$ has least area among  $k$-dimensional free boundary minimal surfaces in the Euclidean ball $B^n$. By comparing the excess of free boundary minimal surfaces    with the excess of the associated  cones over their boundary, we prove  the existence of a gap for the area.
\end{abstract}

\maketitle

\section{Introduction}\label{intro}
\noindent In these notes, we study the area  of $k$-dimensional minimal surfaces in the Euclidean  ball $B^n$ that meet  $\partial B^n$ orthogonally.  These surfaces  are critical points of the area functional in the space of  $k$-dimensional surfaces with   boundary  in $\partial B^n$. They are commonly known as free boundary minimal surfaces. The  equatorial disk $D^k$  is the simplest example. Brendle \cite{B} proved that $D^k$ is the least area   free boundary minimal surface in $B^n$ (see also \cite{FS1} for the case of $2$-dimensional free boundary surfaces). More precisely,

\begin{theorem}[Brendle]\label{brendle}
\textit{Let $\Sigma^k$ be a $k$-dimensional free boundary minimal surface in $B^n$. Then 
 \begin{eqnarray*}
|\Sigma^k|\geq |D^k|
\end{eqnarray*}
Moreover, the equality holds if, and only if,  $\Sigma^k$ is contained in a $k$-dimensional plane in $\mathbb{R}^{n}$.
}
\end{theorem}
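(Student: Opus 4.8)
The plan is to base the whole argument on the monotonicity formula for the area ratio centred at the centre of the ball, feeding the free boundary condition into the boundary terms. First I would record the first variation identity: since $\Sigma^k$ is minimal and meets $\partial B^n=S^{n-1}$ orthogonally, the outward unit conormal $\nu$ along $\partial\Sigma^k$ coincides with the position vector $x$, so $\langle x,\nu\rangle=|x|^2=1$ there. Testing $\int_{\Sigma^k}\mathrm{div}_{\Sigma^k}X\,d\mu=\int_{\partial\Sigma^k}\langle X,\nu\rangle\,ds$ with $X=x$ (for which $\mathrm{div}_{\Sigma^k}x=k$) yields the clean identity $k\,|\Sigma^k|=|\partial\Sigma^k|$. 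Testing instead with truncated radial fields $X=\phi(|x|)\,x$ and differentiating in the truncation radius gives, because $\partial\Sigma^k\subset S^{n-1}$ never enters $B_r(0)$ for $r<1$, the monotonicity formula
\[
\frac{d}{dr}\left(\frac{|\Sigma^k\cap B_r(0)|}{r^k}\right)=\frac{d}{dr}\int_{\Sigma^k\cap B_r(0)}\frac{|x^\perp|^2}{|x|^{k+2}}\,d\mu\;\ge\;0,\qquad 0<r<1,
\]
where $x^\perp$ denotes the component of $x$ normal to $\Sigma^k$.

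As $r\to1^-$ the ratio tends to $|\Sigma^k|$, since $\Sigma^k\subset\overline{B^n}$. If the origin lies on $\Sigma^k$, then as $r\to0^+$ the ratio tends to the density $\Theta(0):=\lim_{r\to0^+}r^{-k}|\Sigma^k\cap B_r(0)|$, which is at least $\omega_k:=|D^k|$ by the standard lower density bound for stationary surfaces. Integrating the monotonicity formula between these scales gives
\[
|\Sigma^k|=\Theta(0)+\int_{\Sigma^k}\frac{|x^\perp|^2}{|x|^{k+2}}\,d\mu\;\ge\;\omega_k=|D^k|,
\]
which settles the inequality whenever $0\in\Sigma^k$.

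The main obstacle is the case $0\notin\Sigma^k$, which genuinely occurs (for example the critical catenoid, whose neck encircles but avoids the centre). Here the ratio tends to $0$ as $r\to0^+$, so centring at the origin gives no lower bound, and the constant $\omega_k$ cannot simply be imported from the cone over $\partial\Sigma^k$: that cone has the same area as $\Sigma^k$ by the identity $k\,|\Sigma^k|=|\partial\Sigma^k|$, yet it is generally not stationary and its vertex density can be arbitrarily small. The device I would use is a free boundary monotonicity formula: after reflecting $\Sigma^k$ across $S^{n-1}$ and centring the ratio at a point $p\in\Sigma^k$, the orthogonal intersection makes the surface and its reflection match tangentially along $\partial\Sigma^k$, so the boundary contributions to the first variation cancel and $r\mapsto r^{-k}|\Sigma^k\cap B_r(p)|$ stays monotone up to a scale comparable to the size of the ball. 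Letting $r\to0$ recovers the density bound $\omega_k$ at $p$, while the large-scale value is governed by $|\Sigma^k|$, transferring the density bound to the total area. In the language of the introduction this is the comparison of the excess of $\Sigma^k$ with the excess of the associated cone over $\partial\Sigma^k$; I expect that verifying the cancellation and sign of the boundary terms — so that the sharp constant $\omega_k$ survives for surfaces missing the centre, where the interior minimality of $\Sigma^k$ must genuinely be used — is the hard part.

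For the equality case, equality in the above forces the remainder $\int_{\Sigma^k}|x|^{-k-2}|x^\perp|^2\,d\mu$ to vanish, hence $x^\perp\equiv0$: the position vector is everywhere tangent to $\Sigma^k$, so $\Sigma^k$ is a cone with vertex at the origin. Equality also pins the density at $\omega_k$, so the vertex is a regular point; a smooth minimal cone regular at its vertex is a $k$-plane. (In particular a nonempty cone forces $0\in\Sigma^k$, so equality cannot occur in the $0\notin\Sigma^k$ regime.) Meeting $S^{n-1}$ orthogonally then identifies $\Sigma^k$ with $D^k$ up to rotation, i.e. $\Sigma^k$ is contained in a $k$-plane, as claimed.
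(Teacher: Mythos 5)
Your preliminary steps are correct and consistent with what the paper records: the first variation with $X=x$ plus the free boundary condition gives $k|\Sigma^k|=|\partial\Sigma^k|$, and when $0\in\Sigma^k$ the classical monotonicity of $r^{-k}|\Sigma^k\cap B_r(0)|$ on $0<r<1$ (valid since $\partial\Sigma^k\subset S^{n-1}$ never meets $B_r(0)$) does give $|\Sigma^k|\geq \Theta(0)\geq |D^k|$. But the case you yourself flag as the main obstacle, $0\notin\Sigma^k$, is exactly where your argument has a genuine gap, and the reflection device you propose does not close it. Inversion across $S^{n-1}$ is conformal but not isometric, so the reflected surface is not minimal; the union $\Sigma^k\cup\iota(\Sigma^k)$ is only a free boundary stationary varifold in the sense of Gr\"uter--Jost, and their monotonicity formula makes the ratio $r^{-k}\bigl(|\Sigma^k\cap B_r(p)|+|\iota(\Sigma^k)\cap B_r(p)|\bigr)$ (suitably weighted) almost monotone. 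The small-scale limit at $p$ is indeed at least $|D^k|$, but the large-scale value is \emph{not} governed by $|\Sigma^k|$: it involves the mass of the reflected copy as well, and the relevant top radius from $p$ is comparable to $1+|p|$, not $1$. Unwinding this yields only a non-sharp bound of the form $|\Sigma^k|\geq c(k)|D^k|$ with $c(k)<1$. This loss of the sharp constant for surfaces missing the centre is precisely the known difficulty that Brendle's argument was designed to overcome, so the step you describe as ``the hard part'' is in fact the entire theorem.

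The paper's route (Lemmas \ref{excess formula geral}, \ref{ordem}, \ref{3}, quoted from \cite{B}) is different in a specific way: one centres not at the origin nor at an interior point but at a boundary point $y\in\partial\Sigma$, and applies the divergence theorem to the tailored vector field $W_{t_0,y}(x)=\tfrac{x}{2}-\tfrac{x-y}{|x-y|^k}-\cdots$ rather than to the bare monotonicity kernel. The two decisive features are that the boundary flux vanishes identically, $\langle W_{0,y}(x),x\rangle=0$ for all $x\in\partial\Sigma$ (an algebraic identity using $|x|=|y|=1$, Lemma \ref{3}), so no reflection is needed, and that the flux through small spheres about $y$ converges exactly to $|D^k|$ (Lemma \ref{ordem}). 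This produces the identity in which $|\Sigma|-|D^k|$ equals a sum of manifestly nonnegative excess terms, i.e.\ inequality (\ref{excess bordo}), giving the sharp bound in all cases at once; equality forces $(x-y)^{\perp}\equiv 0$ for every $y\in\partial\Sigma$, whence $\Sigma$ lies in a $k$-plane. Your equality discussion is fine in spirit but inherits the gap, since it presupposes the sharp inequality in the regime your argument does not reach.
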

This result is the free boundary analogue of a classical result about closed minimal surfaces in the  round sphere $\mathbb{S}^{n}$. Namely,
\begin{theorem}\label{almgren}
\textit{
There exists $\varepsilon(k,n)>0$ so that whenever $\Sigma^k$ is a $k$-dimensional minimal surface in $\mathbb{S}^{n}$ which is not totally geodesic, then
\[
|\Sigma^k|\geq |\mathbb{S}^k|+ \varepsilon(k,n).
\]
}
\end{theorem}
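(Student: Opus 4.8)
The plan is to reduce the gap statement to a density estimate for the minimal cone over $\Sigma^k$ and then to invoke Allard's regularity theorem. Given a $k$-dimensional minimal surface $\Sigma^k \subset \mathbb{S}^n \subset \mathbb{R}^{n+1}$, form the $(k+1)$-dimensional cone $C(\Sigma)=\{t\omega : t\ge 0,\ \omega\in\Sigma\}$. Since $\Sigma$ is minimal in $\mathbb{S}^n$, the cone $C(\Sigma)$ is a stationary integral varifold (a minimal cone) in $\mathbb{R}^{n+1}$. A direct computation with the conical volume element $t^{k}\,dt\,dA_\Sigma$ gives $\Vol(C(\Sigma)\cap B^{n+1}) = \tfrac{1}{k+1}|\Sigma^k|$, and since $|\mathbb{S}^k| = (k+1)\,\omega_{k+1}$ with $\omega_{k+1}$ the volume of the unit $(k+1)$-ball, the density of $C(\Sigma)$ at the vertex is $\Theta(0) = |\Sigma^k|/|\mathbb{S}^k|$. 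The monotonicity formula gives $\Theta(0)\ge 1$, which already recovers the sharp lower bound $|\Sigma^k|\ge|\mathbb{S}^k|$; the point is to produce a definite gap above $1$. The decisive observation is the dichotomy: $\Sigma^k$ is totally geodesic if and only if $C(\Sigma)$ is a $(k+1)$-plane, which in turn holds if and only if $C(\Sigma)$ is a $C^1$ submanifold near the vertex, since a dilation-invariant set that is $C^1$ at the origin must coincide with its tangent plane there.

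The quantitative gap now comes from Allard's regularity theorem. There is a constant $\delta=\delta(k+1,n+1)>0$ so that any $(k+1)$-dimensional stationary integral varifold in $\mathbb{R}^{n+1}$ whose density ratio on some ball centered at a point $x$ of its support is at most $1+\delta$ is a $C^{1,\alpha}$ embedded submanifold in a smaller concentric ball. For the cone $C(\Sigma)$ the density ratio on every ball centered at the vertex equals the vertex density $\Theta(0)$ exactly, by homogeneity; hence the hypothesis $\Theta(0)\le 1+\delta$ is equivalent to a single inequality on the area. If $\Theta(0)\le 1+\delta$ held, Allard's theorem would force $C(\Sigma)$ to be a $C^1$ submanifold near the vertex, hence a plane by the dichotomy above, so $\Sigma^k$ would be totally geodesic.

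Taking the contrapositive, if $\Sigma^k$ is not totally geodesic then $\Theta(0)=|\Sigma^k|/|\mathbb{S}^k|>1+\delta$, that is $|\Sigma^k|\ge |\mathbb{S}^k| + \delta\,|\mathbb{S}^k|$, and we may set $\varepsilon(k,n)=\delta\,|\mathbb{S}^k|$. (For $k=1$ the statement is vacuous, since every minimal curve in $\mathbb{S}^n$ is a great circle.) The main obstacle is the correct application of Allard's theorem at the vertex of the cone: one must check that the conical homogeneity makes the small-density hypothesis verifiable on balls centered at the singular vertex, and that $C^1$-regularity of a cone at its vertex really forces it to be a plane. An alternative route is a compactness/contradiction argument — a sequence of non-totally-geodesic minimal surfaces with areas tending to $|\mathbb{S}^k|$ would have associated cones with vertex density tending to $1$, converging to a multiplicity-one plane — but extracting a contradiction from such convergence still requires an $\varepsilon$-regularity input of Allard type, so the density-gap formulation above is the most direct and yields the explicit constant $\varepsilon=\delta\,|\mathbb{S}^k|$.
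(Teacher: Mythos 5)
Your proof is correct, but it takes a genuinely different route from the paper's. You apply the full varifold version of Allard's theorem at the \emph{vertex} of the cone $C(\Sigma)$, where by homogeneity the density ratio on every ball equals $|\Sigma^k|/|\mathbb{S}^k|$, and you combine this with the rigidity fact that a cone which is a $C^{1,\alpha}$ graph near its vertex must be a plane (the graph function $u$ satisfies $u(\lambda x)=\lambda u(x)$, so $Du$ is constant along rays and equals $Du(0)=0$, forcing $u\equiv 0$). This is the classical ``density gap for minimal cones'' argument; it is direct, needs no compactness, and produces the explicit constant $\varepsilon=\delta\,|\mathbb{S}^k|$. The paper instead argues by contradiction with a minimizing sequence $\Sigma_i$: it applies the monotonicity formula to the cones $C\Sigma_i$ at points $y_i\in\Sigma_i$ (regular points of the cone, not the vertex) to verify the small-density hypothesis of the \emph{smooth} $\varepsilon$-regularity statement it records as Theorem 1.3, deduces uniform curvature bounds $|A_{\Sigma_i}|\le C$, extracts a smooth multiplicity-one graphical limit equal to $\mathbb{S}^k$, and then excludes non-equatorial $\Sigma_i$ near the limit by a Morse index comparison deferred to Section 3. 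The trade-off is clear: your argument leans on the stronger varifold form of Allard's theorem at a genuinely singular point of the cone (the smooth Theorem 1.3 as stated cannot be invoked there), but in exchange it is shorter and quantitative; the paper's argument uses only the curvature-estimate version of Allard at smooth points, at the cost of a compactness-plus-index rigidity step --- which is precisely the template the authors then reuse in the free boundary setting of Theorem 1.4, and is presumably why they present the sphere case this way.
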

Despite the proofs of Theorem 1.1 and Theorem 1.2   both explore a  monotonicity principle for minimal surfaces, they are quite different.  Theorem 1.2, for instance, is only an application of the  Monotonicity Formula for minimal surfaces together with the following    smooth version of Allard's Regularity Theorem: 
\begin{theorem}[Allard]
There exist $\epsilon(k,n)>0$, $C>0$ and $r_0>0$ so that whenever $\Sigma$ is a $k-$dimensional minimal surface in $\mathbb{R}^{n+1}$ whose density satisfies 
\[\theta(x,r)\leq 1+\epsilon(k,n)\]
for every $x\in\Sigma$ and every $r<r_0$, then
\[\sup_{\Sigma}|A_{\Sigma}|\leq C.\]
\end{theorem}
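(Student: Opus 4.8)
The statement is the smooth (curvature-estimate) form of Allard's theorem, and my plan is to prove it by a blow-up argument in which the Monotonicity Formula controls the limit. After rescaling one may fix $r_0=1$, since the density ratio $\theta(x,r)$ and the hypothesis are invariant under dilations while $|A_\Sigma|$ scales homogeneously, so a bound for $r_0=1$ yields the general case by dilation. I then argue by contradiction: if no admissible pair $(\epsilon,C)$ exists, the implication must fail for every choice $\epsilon=1/j$, $C=j$, producing a $k$-dimensional minimal surface $\Sigma_j\subset\mathbb{R}^{n+1}$ with $\theta(x,r)\le 1+1/j$ for all $x\in\Sigma_j$ and all $r<1$, yet with $\sup_{\Sigma_j}|A_{\Sigma_j}|>j\to\infty$.

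The core is a point-selection step: I would choose $p_j\in\Sigma_j$ at which $|A_{\Sigma_j}|$ is essentially maximal, set $\lambda_j:=|A_{\Sigma_j}|(p_j)\to\infty$, and pass to the rescaled surfaces $\tilde\Sigma_j:=\lambda_j(\Sigma_j-p_j)$. The selection is arranged so that $|A_{\tilde\Sigma_j}|(0)=1$ while $|A_{\tilde\Sigma_j}|\le 2$ throughout a ball $B_{R_j}(0)$ with $R_j\to\infty$. Because the density ratio is scale-invariant, $\theta_{\tilde\Sigma_j}(y,\rho)=\theta_{\Sigma_j}(p_j+\lambda_j^{-1}y,\lambda_j^{-1}\rho)\le 1+1/j$ whenever $\rho<\lambda_j$, so the smallness hypothesis survives the blow-up at every fixed scale.

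A uniform bound on $|A|$ together with minimality yields uniform local graphical representations with controlled $C^{1,\alpha}$ norm; elliptic estimates for the minimal surface equation then give $C^\infty_{loc}$ bounds, and Arzel\`a--Ascoli produces a subsequential limit $\Sigma_\infty$, a complete minimal surface in $\mathbb{R}^{n+1}$ with $0\in\Sigma_\infty$, $|A_{\Sigma_\infty}|(0)=1$, and $\theta_{\Sigma_\infty}(0,\rho)\le 1$ for all $\rho>0$. The Monotonicity Formula gives $\theta_{\Sigma_\infty}(0,\rho)\ge\lim_{\rho\to 0^+}\theta_{\Sigma_\infty}(0,\rho)=1$, so $\theta_{\Sigma_\infty}(0,\cdot)\equiv 1$. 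The equality case of the Monotonicity Formula then forces $\Sigma_\infty$ to be a minimal cone with vertex at the origin; but the origin is a smooth point with $|A_{\Sigma_\infty}|(0)=1$, and a cone that is smooth at its vertex is a $k$-plane. A plane has $A\equiv 0$, contradicting $|A_{\Sigma_\infty}|(0)=1$, which establishes the estimate.

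I expect the main obstacle to be the point-selection and the resulting nondegenerate smooth limit: one must guarantee that curvature neither escapes the chosen region nor concentrates in a way that obstructs $C^\infty_{loc}$ convergence, and that the limit is genuinely complete, which is where $R_j\to\infty$ is used. The other delicate ingredient is the rigidity (equality) case of the Monotonicity Formula, which is precisely what upgrades ``density identically one'' to ``flat.'' Both steps are standard, but they are the substantive points of the argument.
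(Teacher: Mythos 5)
The paper does not prove this statement: it is quoted as Allard's classical regularity theorem (in its smooth, curvature-estimate form) and used as a black box in the sketch of Theorem 1.2, so there is no argument of the authors' to compare yours against. Your blow-up proof is the standard way to derive this estimate and is correct in outline: scale invariance of the density ratio, point-selection rescaling, smooth subconvergence under a uniform bound on $|A|$, and the rigidity case of the Monotonicity Formula (constant density forces a cone, and a cone smooth at its vertex is a plane) fit together exactly as you describe. Three points deserve the care you only partly flag. First, $\sup_{\Sigma_j}|A_{\Sigma_j}|$ need not be attained, so the choice of $p_j$ must be of Choi--Schoen type (maximize something like $|A|(x)\,\mathrm{dist}(x,\partial B_R)$ on an exhaustion) to secure both $|A_{\tilde\Sigma_j}|\le 2$ on $B_{R_j}(0)$ and $R_j\to\infty$; this also quietly uses that the surfaces are complete without boundary, which the theorem statement leaves implicit. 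Second, smooth local convergence under a curvature bound can a priori occur with several sheets collapsing together, and it is precisely the hypothesis $\theta\le 1+1/j$ that forces multiplicity one; without saying this, the inequality $\theta_{\Sigma_\infty}(0,\rho)\le 1$ does not follow from local area convergence alone. Third, the statement as printed does not scale correctly (the conclusion should read $\sup_\Sigma|A_\Sigma|\le C/r_0$), and your normalization $r_0=1$ is the right reading of it. None of these is a genuine gap, only places where the details must be supplied.
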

\noindent Indeed,
let $\Sigma_i$ be a sequence of  $k-$dimensional minimal surfaces in $\mathbb{S}^{n}$ such that $\lim_{i\rightarrow\infty}|\Sigma_i|=\mathcal{A}(k,n)$, where $\mathcal{A}(k,n)$ is the infimum for the areas of free boundary minimal surfaces in $\mathbb{S}^n$. If $C\Sigma_i$ denotes  the  minimal cone over $\Sigma_i$   with vertice at $0$ and if $y_i \in \Sigma_i$, then
\[\frac{|\Sigma_i|}{|\mathbb{S}^k|}= \lim_{r\rightarrow\infty} \frac{|C\Sigma_i\cap B_r(y_i)|}{|B^{k+1}|r^{k+1}}\geq \frac{|C\Sigma_i\cap B_r(y_i)|}{|B^{k+1}|r^{k+1}}= \theta(C\Sigma_i,y_i,r)\geq 1,\]
with equality if, and only if, $\Sigma_i$ is an equatorial sphere $\mathbb{S}^k$.
The inequality follows from the monotonicity formula for minimal surfaces. Hence, $\mathcal{A}(k,n)= |\mathbb{S}^k|$ and from Theorem 1.3 we conclude that  $|A_{\Sigma_i}|\leq C$. By  standard compactness results, $\Sigma_i$ converges graphically  and with multiplicity one to $\mathbb{S}^k$. 
A comparison analysis between the Morse index  of $\Sigma_i$ and $\mathbb{S}^k$ implies that $\Sigma_i$ is an equatorial sphere for $i$ large enough, see Section $3$ below.

In view of Theorems \ref{brendle} and \ref{almgren}, it is natural to expect similar gap phenomena    also for the area of free boundary  minimal surfaces in $B^n$. In contrast with Theorem 1.2, the  smooth free boundary version of Allard's regularity theorem  does not readily apply to this end. 
It can be  proved, however, that it follows from the strong  Allard's regularity theorem, proved by  Gr\"{u}ter and Jost \cite{GJ}, together with the analysis developed in \cite{B}, which we also use here. 
Our first result is a direct and simpler proof of this fact:

\begin{theorem}\label{high dimension}
There exists $\varepsilon(k,n)>0$ such that whenever $\Sigma^k$ is a $k$-dimensional free boundary minimal surface in $B^n$ satisfying
\begin{eqnarray*}
|\Sigma^k|\,< \,|D^k|+ \varepsilon(k,n),
\end{eqnarray*} 
then $\Sigma^k$ is, up to ambient isometries, the equatorial disk $D^{k}$.
\end{theorem}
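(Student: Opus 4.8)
The plan is to argue by contradiction via a compactness argument, using the cone over the boundary to feed the strong free boundary version of Allard's theorem. Suppose the statement fails. Then there is a sequence $\Sigma_i^k$ of free boundary minimal surfaces in $B^n$ with $|\Sigma_i| < |D^k| + 1/i$, none of which is congruent to the equatorial disk. By Theorem \ref{brendle} we have $|\Sigma_i| \geq |D^k|$, so $|\Sigma_i| \to |D^k|$, and it suffices to show that $\Sigma_i$ is the equatorial disk for $i$ large. The first step converts the small area excess into a density statement through the associated cone. Testing the first variation with the position vector field $x$, whose tangential divergence along a minimal $\Sigma$ equals $k$ and whose outward conormal $\nu$ along $\partial\Sigma\subset\partial B^n$ agrees with $x$ (by orthogonality, with $|x|=1$), gives
\begin{equation*}
k\,|\Sigma| \;=\; \int_{\partial\Sigma}\langle x,\nu\rangle \;=\; |\partial\Sigma|.
\end{equation*}
Hence the cone $C\partial\Sigma$ over the boundary with vertex at the origin satisfies $|C\partial\Sigma| = \tfrac1k|\partial\Sigma| = |\Sigma|$, so its density at the origin is exactly $\theta(C\partial\Sigma,0) = |\Sigma|/|D^k| \to 1$. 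Comparing the excess of $\Sigma_i$ with that of its boundary cone and invoking the monotonicity formula for free boundary minimal surfaces of Gr\"uter--Jost \cite{GJ}, the convergence $|\Sigma_i|\to|D^k|$ forces the density ratios $\theta(\Sigma_i,\xi,r)$ to be uniformly pinched to those of a flat equatorial disk, both at interior and at boundary points and at every scale.

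With this uniform density pinching in place, I would apply the strong version of Allard's regularity theorem of Gr\"uter--Jost, valid up to a free boundary on the sphere: density sufficiently close to the flat value yields uniform $C^{1,\alpha}$ and second fundamental form bounds for $\Sigma_i$, including on a fixed neighborhood of $\partial\Sigma_i$. Standard compactness then extracts a subsequence converging smoothly and with multiplicity one to a free boundary minimal surface $\Sigma_\infty$ with $|\Sigma_\infty| = |D^k|$; by the equality case of Theorem \ref{brendle}, $\Sigma_\infty$ is a flat equatorial disk.

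For the concluding rigidity, I would write $\Sigma_i$, for $i$ large, as a normal graph over $D^k$ of a field $u_i \to 0$ in $C^2$. Minimality together with the orthogonal boundary condition makes $u_i$ an approximate solution of the Jacobi problem on the flat disk, namely $\Delta u = 0$ in $D^k$ with the Steklov-type condition $\partial_\nu u = u$ on $\partial D^k$ (coming from the fact that $\partial B^n$ has shape operator the identity). The kernel of this problem consists exactly of the normal fields $\langle a,x\rangle$ generated by the ambient rotations that tilt $D^k$ off itself. After adjusting $\Sigma_i$ by such an isometry so that $u_i$ is orthogonal to this kernel, the linearized operator is invertible; a non-degeneracy argument of implicit function / unique continuation type then forces $u_i \equiv 0$, so $\Sigma_i$ is the equatorial disk up to isometry, contradicting the choice of the sequence.

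The main obstacle I anticipate is the density and regularity analysis up to the free boundary: turning the cone comparison into a genuine density pinching at boundary points, where the curvature of $\partial B^n$ and the reflection across it enter the monotonicity, and then verifying precisely the hypotheses of the Gr\"uter--Jost free boundary Allard theorem there. The interior estimates and the closed-case style compactness are routine; it is the boundary behavior, exactly where the naive smooth Allard theorem fails as noted in the introduction, that carries the real difficulty.
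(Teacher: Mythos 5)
Your overall architecture (contradiction, compactness, smooth convergence to $D^k$, then a linearized rigidity step) is sound, and the route you choose is in fact the one the paper explicitly acknowledges in the introduction as viable --- namely, deducing curvature estimates from the strong free boundary Allard theorem of Gr\"uter--Jost --- but it is precisely the route the paper is written to avoid. The paper's ``direct and simpler proof'' obtains the curvature bound (Lemma \ref{gap curvature estimates}) by a blow-up argument: if $|A_{\Sigma_i}|$ blows up at $x_i$, interior Allard forces $|x_i|\to 1$, and the rescaled limit $\Sigma_\infty$ is shown to have vanishing excess $\int |z^\perp|^2/|z|^{k+2}$ by comparing the excess of $\Sigma_i$ based at $x_i$ with the excess of the cone $C_1\partial\Sigma_i$ over the boundary (Proposition \ref{main result} together with Lemma \ref{main claim}), contradicting $|A_{\Sigma_\infty}|(0)=1$. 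This is where the real work of the paper lies, and it is exactly the step you defer: your claim that $|\Sigma_i|\to|D^k|$ ``forces the density ratios to be uniformly pinched \ldots at boundary points and at every scale'' is the crux, not a consequence you may quote. It does follow from Volkmann/Brendle-type boundary monotonicity ($\Theta(y)\le |\Sigma|/|D^k|$ for $y\in\partial B^n$) plus a reflection argument to bridge interior points close to $\partial B^n$, but you have only named the obstacle rather than resolved it, so as written your proof has a gap at its central step.

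The endgame also differs. You propose a non-degeneracy/implicit function theorem argument: the Jacobi problem $\Delta u=0$, $\partial_\nu u=u$ on $D^k$ has kernel exactly the linear functions $\langle a,x\rangle$ (the rotation fields), so after modding out by $O(n)$ the linearization is invertible and $u_i\equiv 0$. The paper instead proves an index lower bound, $\mathrm{index}(\Sigma^k)\ge n+1$ for any non-totally-geodesic free boundary minimal surface (Lemma \ref{index}), and derives a contradiction from the fact that the limiting disk has index $n-k$ and first Steklov eigenvalue of multiplicity exactly $k$. Both endgames rest on the same spectral fact about the Steklov problem on $D^k$, and both are correct in principle; the IFT version requires you to set up the nonlinear map (mean curvature, contact angle) on normal graphs and verify its Fredholm/surjectivity properties and that the kernel dimension $k(n-k)$ matches the dimension of the $O(n)$-orbit of $D^k$, none of which you carry out. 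In short: a legitimate and genuinely different strategy, honestly flagged, but with its two hardest verifications (boundary density pinching for Gr\"uter--Jost, and the non-degeneracy of $D^k$ in the IFT framework) left as declarations rather than proofs, whereas the paper supplies complete substitutes for both.
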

The $2$-dimensional case in Theorem \ref{high dimension}  was  proved by Ketover  in \cite{K}.
The key ingredients in the proof are an excess inequality for $2$-dimensional free boundary  surfaces in  $B^n$, proved  by Vokmann in \cite{V} (see also \cite{R-V}), and the classical Nitsche's Uniqueness Theorem for free boundary minimal disks in $B^3$ (see also \cite{FS}, for the generalization  to  high codimension). The  excess inequality  is particularly important  in proving curvature estimates for a sequence of free boundary minimal surfaces with area sufficiently close to the area of the equatorial disk. The   main difficulty in implementing the arguments of \cite{K} to  $k$-dimensional surfaces in $B^n$ is   that  neither the excess inequality in the form used in \cite{K} nor  Nitsche's  Theorem is  readily available when $k\geq 3$. To get around these difficulties, we consider a slightly more general quantity, originated in \cite{B} and   which also resemble an excess type formula, and compare it with that of the free boundary cones over the boundaries  to  obtain the necessary curvature estimates. Finally, we replace the use of Nitsche's   Theorem by an index of stability analysis.

\begin{remark}
We observe that the $2$-dimensional proof of Theorem \ref{high dimension} given in \cite{K} can be extended  to constant mean curvature surfaces in $B^3$. 
The   quantity to consider  in this case is the Willmore energy instead of    area.	
Let $\Sigma^2$ be a  surface with boundary  in $\mathbb{R}^3$,  the Willmore energy $\mathcal{W}(\Sigma)$  is defined as 
	\begin{eqnarray*}\label{willmore energy}
	\mathcal{W}(\Sigma)\,=\, \int_{\Sigma} H^2\,d_{\Sigma} + \int_{\partial \Sigma} k_g\,d\sigma.
	\end{eqnarray*}
\noindent \textbf{$\varepsilon$-Regularity.}  There exists $\varepsilon>0$  such that whenever  $\Sigma$ is a free boundary  surface with constant mean curvature in $B^3$ and satisfying
	\begin{eqnarray*}
		\mathcal{W}(\Sigma)\,<\, 2\pi + \varepsilon,
	\end{eqnarray*}
	then $\Sigma$ is either an equatorial disk or a spherical cap. The constant $\varepsilon$ is independent of the value of the mean curvature.
\end{remark}

Finally, we recall an unique continuation result which might be  of  independent interest in view of the discussion above. This result seems to be well known among experts but not clearly stated in the literature: 
\begin{proposition}\label{unique continuation}	\textit{Let $\Sigma^k$ be a  $k$-dimensional   free boundary minimal surface in $B^n$ which is smooth except possibly at the origin. If $\partial\Sigma$ is a $(k-1)$-minimal surface in $\mathbb{S}^{n-1}$, then $\Sigma^k$ is the minimal cone $C_1\partial \Sigma$.}	\end{proposition}

\section{Higher dimension free boundary minimal surfaces}

\noindent
 We start by recalling an excess inequality  for free boundary  minimal surfaces in the ball proved in \cite{B}.  More precisely,
 if $\Sigma$ is a $k$-dimensional free boundary minimal surface in $B^n$ and if $y \in \partial \Sigma$, then 
\begin{eqnarray}\label{excess bordo}
\int_{\Sigma^k}  \frac{|(x-y)^{\perp}|^2}{|x-y|^{k+2}}\, d_{\Sigma} \leq |\Sigma^k|- |D^k|.
\end{eqnarray}
 This inequality, which implies Theorem \ref{brendle},  follows from a monotonicity argument obtained by an application of the  Divergence Theorem to the vector field $W_{t_0,y}(x)$ defined on $B^n-\{y\}$ and given  by
\[
W_{t_0,y}(x)= \frac{x}{2}- \frac{x-y}{|x-y|^k}-\frac{k-2}{2}\int_{t_0}^{|y|^2}\frac{tx-y}{|tx-y|^k}dt,
\]
where we assume that  $t_0\in\{0,1\}$ if $y\in \partial \Sigma$ and $t_0=|y|^2$ if $y\notin \partial \Sigma$. Note that the   integrand  is well defined by our definition of $t_0$.
We will need a formula similar to (\ref{excess bordo}) for when $y$ is not necessarily at the boundary. For this, we need to recall the techniques  in \cite{B} involved in the proof of (\ref{excess bordo}).
\begin{lemma}\label{excess formula geral}
Let $\Sigma^k$ a free boundary surface in $B^n$ and $y\in \Sigma$. For $r$ sufficiently  small, we have
\begin{eqnarray}\label{excess hypersurface}
	&2&\int_{\Sigma\backslash B_r(y)}\, \frac{|(x- y)^{\perp}|^2}{|x-y|^{k+2}}\,d_{\Sigma} + \frac{k-2}{k}\int_{\Sigma \backslash B_r(y)} \int_{t_0}^{|y|^2} \frac{t\,|(tx- y)^{\perp}|^2}{|tx-y|^{k+2}}\, dt\, d_{\Sigma}\nonumber \\
	\nonumber \\
	&=&|\Sigma \backslash B_r(y)| 
	-\frac{2}{k}\int_{\Sigma\cap \partial B_r(y)} \langle W_{t_0,y}(x),\nu(x)\rangle\,d\sigma  \nonumber \\ 
	\nonumber \\
	&& \quad\quad \quad\quad\quad\quad\quad  - \frac{2}{k} \int_{\partial\Sigma} \langle W_{t_0,y},x\rangle d\sigma +2\int_{\Sigma\backslash B_r(y)} \langle \overrightarrow{H},W_{t_0,y}\rangle d_{\Sigma}, 
\end{eqnarray}
where $B_r(y)$ is a geodesic ball on $\Sigma^k$, $\nu$ is the outward co-normal to $\partial B_r(y)$  with respect to $\Sigma\backslash B_r(y)$, and $\overrightarrow{H}$ is the mean curvature vector of $\Sigma^k$.
\end{lemma}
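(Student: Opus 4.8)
The plan is to apply the tangential divergence theorem directly to the vector field $W_{t_0,y}$ on the region $\Sigma\backslash B_r(y)$, which is precisely the mechanism behind (\ref{excess bordo}); the only new feature is that $y$ is now allowed to lie in the interior of $\Sigma$, so the singularity of $W_{t_0,y}$ at $x=y$ must be removed by excising the small ball $B_r(y)$. For a $k$-dimensional surface the divergence theorem reads
\begin{equation*}
\int_{\Sigma\backslash B_r(y)}\text{div}_{\Sigma}W_{t_0,y}\,d_{\Sigma}=\int_{\partial(\Sigma\backslash B_r(y))}\langle W_{t_0,y},\eta\rangle\,d\sigma-\int_{\Sigma\backslash B_r(y)}\langle W_{t_0,y},\overrightarrow{H}\rangle\,d_{\Sigma},
\end{equation*}
where $\eta$ is the outward unit conormal. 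The boundary of $\Sigma\backslash B_r(y)$ has two components, the original free boundary $\partial\Sigma\subset\mathbb{S}^{n-1}$ and the inner sphere $\Sigma\cap\partial B_r(y)$, so the whole proof reduces to (i) computing $\text{div}_{\Sigma}W_{t_0,y}$ pointwise, and (ii) identifying the two boundary contributions.

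For step (i) I would differentiate $W_{t_0,y}$ term by term along an orthonormal frame $\{e_i\}$ of $T_x\Sigma$. The position term gives $\text{div}_{\Sigma}(x/2)=k/2$. The crucial computation is the pointwise identity
\begin{equation*}
\text{div}_{\Sigma}\frac{x-y}{|x-y|^{k}}=k\,\frac{|(x-y)^{\perp}|^2}{|x-y|^{k+2}},
\end{equation*}
obtained by writing $\nabla_{e_i}\big(|x-y|^{-k}(x-y)\big)=|x-y|^{-k}e_i-k|x-y|^{-k-2}\langle x-y,e_i\rangle(x-y)$, summing over $i$, and using $\sum_i\langle x-y,e_i\rangle^2=|(x-y)^{\top}|^2=|x-y|^2-|(x-y)^{\perp}|^2$ so that the two $|x-y|^{-k}$ terms cancel. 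The same computation applied to $tx-y$ yields $\text{div}_{\Sigma}\tfrac{tx-y}{|tx-y|^{k}}=kt\,\tfrac{|(tx-y)^{\perp}|^2}{|tx-y|^{k+2}}$, and differentiating under the $t$-integral gives the contribution of the last term of $W_{t_0,y}$. Summing the three pieces produces
\begin{equation*}
\text{div}_{\Sigma}W_{t_0,y}=\frac{k}{2}-k\,\frac{|(x-y)^{\perp}|^2}{|x-y|^{k+2}}-\frac{k(k-2)}{2}\int_{t_0}^{|y|^2}\frac{t\,|(tx-y)^{\perp}|^2}{|tx-y|^{k+2}}\,dt.
\end{equation*}

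For step (ii), along $\partial\Sigma$ the free boundary condition says that $\Sigma$ meets $\mathbb{S}^{n-1}$ orthogonally, so the outward conormal is the position vector $x$, turning that boundary integral into $\int_{\partial\Sigma}\langle W_{t_0,y},x\rangle$; along the inner sphere the outward conormal of $\Sigma\backslash B_r(y)$ points into the removed ball, producing the term $\int_{\Sigma\cap\partial B_r(y)}\langle W_{t_0,y},\nu\rangle$. Substituting the divergence formula, moving the two excess integrals to the left-hand side, and multiplying through by $2/k$ yields (\ref{excess hypersurface}) verbatim. The only genuinely delicate point is the bookkeeping: fixing the orientation and sign of the inner conormal $\nu$, and confirming the cancellation of the two $|x-y|^{-k}$ terms in the divergence (this is exactly what makes the right-hand side a pure boundary-plus-mean-curvature expression). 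Everything else is a routine application of the first variation formula, valid for every $r$ small enough that $\partial B_r(y)$ meets $\Sigma$ transversally.
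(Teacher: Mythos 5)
Your proof is correct and follows essentially the same route as the paper, which simply defers to Section 2 of \cite{B}: one applies the tangential divergence theorem to $W_{t_0,y}$ on $\Sigma\setminus B_r(y)$, uses the pointwise identity $\mathrm{div}_{\Sigma}\frac{x-y}{|x-y|^{k}}=k\frac{|(x-y)^{\perp}|^2}{|x-y|^{k+2}}$ (and its analogue for $tx-y$), and identifies the outward conormal along $\partial\Sigma$ with the position vector via the free boundary condition. Your divergence computation and sign bookkeeping reproduce \eqref{excess hypersurface} exactly, so nothing further is needed.
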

\begin{proof}
A computation following  Section 2 in \cite{B} gives
\[
div_{\Sigma} W_{t_0,y}= \frac{k}{2} - k \frac{|(x-y)^{\perp}|^2}{|x-y|^{k+2}} - \frac{k-2}{2} \int_{t_0}^{|y|^2} tk \frac{|(tx-y)^{\perp}|^2}{|tx-y|^{k+2}}.
\]
On the other hand, we have that
\[
div_{\Sigma}W_{t_0,y}=  div_{\Sigma}W_{t_0,y}^{\top} - k\langle W_{t_0,y}, \overrightarrow{H} \rangle.
\]
Integrating both sides above over $\Sigma\backslash B_r(y)$ and applying the Divergence Theorem we conclude the proof.
\end{proof}

The next lemma deals with the second term in the right hand side of (\ref{excess hypersurface}):
\begin{lemma}\label{ordem}
Let $\Sigma^k$ be a free boundary  surface in $B^n$ and let $\varphi(y)=1$ if $y\in \partial \Sigma$ and $\varphi(y)=2$ if $y \in \Sigma\backslash \partial \Sigma$. Then
\[\lim_{r\rightarrow 0}\frac{2}{k}\int_{\Sigma\cap \partial B_r(y)} \langle W_{t_0,y}(x),\nu(x)\rangle = \varphi(y)\,|D^k|. \]
\end{lemma}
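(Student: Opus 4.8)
The plan is to discard the regular part of $W_{t_0,y}$, keep only its singular part, and evaluate the resulting boundary integral by blowing up $\Sigma$ at $y$. First I decompose
\[
W_{t_0,y}(x)=-\frac{x-y}{|x-y|^k}+R(x),\qquad R(x)=\frac{x}{2}-\frac{k-2}{2}\int_{t_0}^{|y|^2}\frac{tx-y}{|tx-y|}\,dt .
\]
The remainder $R$ stays bounded as $x\to y$: the term $x/2$ is continuous and the integrand $\frac{tx-y}{|tx-y|}$ is a unit vector, so $|R|$ is uniformly bounded near $y$. Since the $(k-1)$-dimensional measure of $\Sigma\cap\partial B_r(y)$ is $O(r^{k-1})$, the contribution of $R$ to $\frac{2}{k}\int_{\Sigma\cap\partial B_r(y)}\langle W_{t_0,y},\nu\rangle\,d\sigma$ is $O(r^{k-1})\to 0$. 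Hence
\[
\lim_{r\to 0}\frac{2}{k}\int_{\Sigma\cap\partial B_r(y)}\langle W_{t_0,y},\nu\rangle\,d\sigma=-\frac{2}{k}\lim_{r\to 0}\int_{\Sigma\cap\partial B_r(y)}\Big\langle \frac{x-y}{|x-y|^k},\nu\Big\rangle\,d\sigma,
\]
where $\nu$ is the unit conormal to $\Sigma\cap\partial B_r(y)$ pointing toward $y$ (the outward conormal of $\Sigma\setminus B_r(y)$).

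Next I would rescale. Writing $x=y+rz$ and setting $\Sigma_r=\frac{1}{r}(\Sigma-y)$, the slice $\Sigma\cap\partial B_r(y)$ corresponds to $\Sigma_r\cap\mathbb{S}^{n-1}$, one has $d\sigma(x)=r^{k-1}\,d\sigma_r(z)$, and on $|z|=1$ the singular vector becomes $\frac{x-y}{|x-y|^k}=\frac{z}{r^{k-1}}$, while $\nu$ is scale invariant. The powers of $r$ cancel and the integral above reduces to $-\int_{\Sigma_r\cap\mathbb{S}^{n-1}}\langle z,\nu_r\rangle\,d\sigma_r$. By smoothness of $\Sigma$ (up to and including $\partial\Sigma$, via the Gr\"uter--Jost boundary regularity), the rescaled surfaces $\Sigma_r$ converge in $C^1_{loc}$ to the tangent $k$-plane $T_y\Sigma$ when $y$ is interior, and to a tangent half-$k$-plane $H\subset T_y\Sigma$ when $y\in\partial\Sigma$; correspondingly $\nu_r\to -z$, the inward radial conormal. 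Thus $\langle z,\nu_r\rangle\to -|z|^2=-1$ and the integral converges to $\mathrm{vol}_{k-1}\big(T_y\Sigma\cap\mathbb{S}^{n-1}\big)$, respectively $\mathrm{vol}_{k-1}\big(H\cap\mathbb{S}^{n-1}\big)$.

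Finally, $T_y\Sigma\cap\mathbb{S}^{n-1}$ is a unit $(k-1)$-sphere of measure $|\mathbb{S}^{k-1}|$ at an interior point, and a hemisphere of measure $\tfrac12|\mathbb{S}^{k-1}|$ at a boundary point. Using $|\mathbb{S}^{k-1}|=k\,|D^k|$ (the equatorial disk being a unit $k$-ball), the limit equals $\frac{2}{k}|\mathbb{S}^{k-1}|=2|D^k|$ in the interior case and $\frac{2}{k}\cdot\tfrac12|\mathbb{S}^{k-1}|=|D^k|$ in the boundary case, i.e. $\varphi(y)\,|D^k|$ as claimed. I expect the rescaling step to be the main obstacle: one must justify $\Sigma_r\to T_y\Sigma$ (resp.\ to $H$) together with convergence of the conormals $\nu_r\to -z$, uniformly enough to pass to the limit under the integral. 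At an interior point this is the standard smooth convergence of blow-ups at a regular point; at a free boundary point it relies on regularity of $\Sigma$ up to $\partial\Sigma$, which guarantees that near $y$ the surface is a smooth half-disk whose tangent object is exactly half of $T_y\Sigma$, producing the hemisphere and hence the factor $\tfrac12$.
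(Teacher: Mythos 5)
Your argument is correct and coincides with the computation the paper itself relies on: the authors' proof of this lemma is simply a citation of Section 2 of Brendle's paper, where the flux of $W_{t_0,y}$ through small spheres centered at $y$ is evaluated exactly as you do, by isolating the singular term $-\frac{x-y}{|x-y|^k}$, discarding the bounded remainder against the $O(r^{k-1})$ measure of the slice, and blowing up $\Sigma$ at $y$ to a $k$-plane (interior point, full sphere $|\mathbb{S}^{k-1}|=k|D^k|$) or a half-plane meeting $\partial B^n$ orthogonally (free boundary point, hemisphere), yielding the factor $\varphi(y)$. The one small point to watch is that if the integrand in $W_{t_0,y}$ is meant to carry the exponent $|tx-y|^{-k}$ (as in Brendle's original vector field) rather than the unit vector printed in this paper, then the remainder $R$ is only $O(|x-y|^{2-k})$ near a boundary point rather than bounded, but its contribution to the slice integral is still $O(r)\to 0$, so your conclusion is unaffected.
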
 
\begin{proof}
The proof is essentially contained in Section 2 of \cite{B}. More precisely,  an application of   \cite[Lemma 8]{B} gives
	\[
	W_{t_0,y}(x)= - \frac{x-y}{|x-y|^k} + o(\frac{1}{|x-y|^{k-1}})
	\]
Note that this statement is trivial when $y\notin \partial \Sigma$ or when $y\in \partial \Sigma$ and $t_0=1$. The lemma now follows from the computations leading to equation (2) in \cite{B}; the only minor difference  comes from the possibility of $y\notin \partial \Sigma$.
\end{proof}

\begin{lemma}\label{3}
If $y\in \partial \Sigma$, then $\langle W_{0,y}(x),x\rangle=0$ for every $x\in\partial\Sigma-\{y\}$.
\end{lemma}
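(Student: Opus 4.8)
The plan is to reduce everything to the definition of $W_{0,y}$ and exploit that both $x$ and $y$ lie on $\partial B^n=\mathbb{S}^{n-1}$. Setting $t_0=0$ and using $|x|=|y|=1$ (so that $|y|^2=1$ and the upper limit of the integral becomes $1$), I would compute the pairing $\langle W_{0,y}(x),x\rangle$ by taking the inner product of each of the three summands of $W_{0,y}$ with $x$. The first term contributes $\langle \tfrac{x}{2},x\rangle=\tfrac12$. For the second term, the normalization $|x|=|y|=1$ gives the identity $\langle x-y,x\rangle = 1-\langle x,y\rangle = \tfrac12|x-y|^2$, so
\[
\Big\langle -\frac{x-y}{|x-y|^{k}},\,x\Big\rangle \;=\; -\frac{\tfrac12|x-y|^2}{|x-y|^{k}} \;=\; -\tfrac12\,|x-y|^{\,2-k}.
\]

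The only term requiring work is the integral term, and the key observation is that its integrand is an exact derivative in $t$. Indeed, writing $|tx-y|^2 = t^2-2t\langle x,y\rangle+1$, one checks that $\tfrac{d}{dt}|tx-y|^{\,2-k} = (2-k)\,\dfrac{\langle tx-y,\,x\rangle}{|tx-y|^{k}}$, so that $\dfrac{\langle tx-y,x\rangle}{|tx-y|^{k}} = \dfrac{1}{2-k}\,\dfrac{d}{dt}|tx-y|^{\,2-k}$. Evaluating from $t=0$ to $t=1$ and using $|x-y|$ at the top endpoint and $|y|=1$ at the bottom endpoint yields $\int_0^1 \tfrac{\langle tx-y,x\rangle}{|tx-y|^{k}}\,dt = \tfrac{1}{2-k}\big(|x-y|^{\,2-k}-1\big)$. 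Multiplying by the prefactor $-\tfrac{k-2}{2}$ and using $\tfrac{k-2}{2-k}=-1$ gives the contribution $\tfrac12\big(|x-y|^{\,2-k}-1\big)$.

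Adding the three contributions, $\tfrac12 - \tfrac12|x-y|^{\,2-k} + \tfrac12\big(|x-y|^{\,2-k}-1\big) = 0$, which is exactly the claim. There is no genuine obstacle here: the lemma is a direct computation, and the only point that needs care is recognizing the exact-derivative structure of the integrand, which is what makes the integral collapse into boundary terms that cancel the other two summands. I would also remark on the degenerate case $k=2$: there the antiderivative above is formally singular, but the coefficient $\tfrac{k-2}{2}$ in front of the integral vanishes identically, while $|x-y|^{\,2-k}=1$, so the first two terms already cancel and the conclusion $\langle W_{0,y}(x),x\rangle=0$ persists.
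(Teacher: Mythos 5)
Your computation is correct and is essentially the verification carried out in Section 2 of Brendle's paper, to which the text defers for this lemma: with $|x|=|y|=1$ the integrand is the exact derivative $\tfrac{1}{2-k}\tfrac{d}{dt}|tx-y|^{2-k}$, so the integral collapses to boundary terms that cancel the first two summands (and the $k=2$ case degenerates harmlessly as you note). The only point worth flagging is that you have, correctly, read the denominator in the integral term as $|tx-y|^{k}$; the exponent $k$ is missing from the displayed definition of $W_{t_0,y}$ in the paper, which is a typo.
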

\begin{proof}
See Section 2 in \cite{B}.
\end{proof}

\begin{remark}
Applying Lemmas \ref{excess formula geral},  \ref{ordem}, and  \ref{3}, we obtain  the general form of  inequality  (\ref{excess bordo}) when  $y\in \partial \Sigma$:
\begin{equation}
\int_{\Sigma} \frac{|(x-y)^{\perp}|^2}{|x-y|^{k+2}} + \frac{k-2}{2k}\int_{\Sigma} \int_{0}^1\frac{tk|(tx-y)^{\perp}|^2}{|tx-y|^{k+2}} -  \int_{\Sigma} \langle \overrightarrow{H}, W_{0,y} \rangle = \frac{|\Sigma| - |D^k|}{2}.
\end{equation}
\end{remark}

\begin{proposition}\label{main result}
Let $\Sigma^k$ be a $k$-dimensional free boundary minimal surface in $B^n$ and $C_1\partial\Sigma$  the   cone with vertice at the origin and base $\partial\Sigma$. If $y\in \Sigma - C_1\partial \Sigma$, then
		\begin{equation}\label{excess brendle}
		\int_{\Sigma} \frac{|(x-y)^{\perp}|^2}{|x-y|^{k+2}}=	\int_{C_1\partial\Sigma} \frac{|(x-y)^{\perp}|^2}{|x-y|^{k+2}}  + \int_{C_1\partial\Sigma}\langle \overrightarrow{H}_{C_1\partial\Sigma},\frac{x-y}{|x-y|^k}\rangle - |D^k|.
		\end{equation}
If $y\in \partial\Sigma$, then
\begin{equation}\label{excess cone}
	\int_{\Sigma} \frac{|(x-y)^{\perp}|^2}{|x-y|^{k+2}}=	\int_{C_1\partial\Sigma} \frac{|(x-y)^{\perp}|^2}{|x-y|^{k+2}}  + \int_{C_1\partial\Sigma}\langle \overrightarrow{H}_{C_1\partial\Sigma},\frac{x-y}{|x-y|^k}\rangle.
\end{equation}
\end{proposition}

\begin{proof}
For this proposition we choose $t_0=|y|^2$. Hence, the vector field $W_{t_0,y}$ becomes
\begin{eqnarray*}
		W_y=\frac{x}{2}- \frac{x-y}{|x-y|^k}.
\end{eqnarray*}
 First the case $y\notin \partial \Sigma$. Applying
	 Lemma \ref{excess formula geral} and Lemma \ref{ordem} we obtain
	\begin{equation}\label{equation excess}
	2\int_{\Sigma}\, \frac{|(x- y)^{\perp}|^2}{|x-y|^{k+2}}\,d_{\Sigma} =|\Sigma| 
	-2|D^k| 
	-\frac{2}{k} \int_{\partial\Sigma}\langle W_y(x),x\rangle d\sigma. 
	\end{equation}
	Now we look at the last  term in (\ref{equation excess}). Let  $C_1\partial \Sigma$ be the free boundary cone over $\partial \Sigma$ and vertice at $0$.  Applying Lemma \ref{excess formula geral}  to $C_1\partial \Sigma$ and observing that  $C_1\partial \Sigma$ might not be  a minimal surface, we obtain:
	\begin{eqnarray*}\label{excess sequence}
		2\int_{C_1\partial\Sigma\backslash B_r(0)}\, \frac{|(x- y)^{\perp}|^2}{|x-y|^{k+2}} 	=|C_1\partial\Sigma\, \backslash\, B_r(0)| 
		-\,\frac{2}{k}\int_{C_1\partial\Sigma\cap \partial B_r(0)} \langle W_{y},\nu\rangle\,d\sigma  \\
	-\frac{2}{k} \int_{\partial\Sigma}\langle W_y(x),x\rangle d\sigma
		+2\int_{C_1\partial \Sigma \backslash B_r(0)} \langle \overrightarrow{H}_{C_1\partial\Sigma},W_y\rangle\, d_{C_1\partial \Sigma}.
	\end{eqnarray*}
	Taking the limit  as $r\rightarrow 0$ in above expression, we obtain
	\begin{eqnarray}\label{conta}
		2\int_{C_1\partial\Sigma} \frac{|(x- y)^{\perp}|^2}{|x-y|^{k+2}}\,d_{C_1\Sigma}
	= |C_1\partial\Sigma| &-& \frac{2}{k} \int_{\partial\Sigma}\langle W_y(x),x\rangle d\sigma \nonumber \\
	&+& 2\int_{C_1\partial \Sigma} \langle \overrightarrow{H}_{C_1\partial\Sigma},W_y\rangle\, d_{C_1\partial \Sigma}. 
	\end{eqnarray}
	Plugging (\ref{excess sequence}) into (\ref{equation excess}),  we obtain
			\begin{eqnarray}\label{substitui}
	2\int_{\Sigma}\, \frac{|(x- y)^{\perp}|^2}{|x-y|^{k-2}}\,d_{\Sigma} 
  &=&|\Sigma|-|C_1\partial\Sigma|+2\int_{C_1\partial\Sigma} \frac{|(x- y)^{\perp}|^2}{|x-y|^{k+2}} \nonumber\\
\nonumber \\
 &-&2\int_{C_1\partial \Sigma} \langle \overrightarrow{H}_{C_1\partial\Sigma},W_{y}\rangle\, d_{C_1\partial \Sigma} - 2|D^k|.
	\end{eqnarray}
	The free boundary condition of $\Sigma$ combined with the Divergence Theorem applied to the position vector $X=\overrightarrow{x}$ give
	\begin{eqnarray*}
		k|C_1\partial\Sigma|= |\partial\Sigma| - k\int_{C_1\partial\Sigma} \langle \overrightarrow{H}_{C_1\partial\Sigma}, x\rangle d_{\Sigma}=|\partial\Sigma|=k|\Sigma|.
	\end{eqnarray*}
The case $y\in \partial \Sigma$ is done in similar manner with minor modifications. This completes the proof of the proposition.
\end{proof}

\section{Proof of Theorem 1.4}
Following \cite{S}, we have for  every $X \in \mathcal{X}(\mathbb{R}^{n+1})$  the following expression for the second variation of area $\Sigma^k$ in the direction of $X$
	\[	\delta^2\Sigma(X,X)= \int_{\Sigma} \bigg(|D^{\perp}X|^2-|\langle A,X\rangle|^2\bigg) d_{\Sigma} + \int_{\partial\Sigma} \langle D_XX, \nu\rangle d\sigma,\]
	where $A:\mathcal{X}(\Sigma)\times \mathcal{X}(\Sigma)\rightarrow \mathcal{X}^{\perp}(\Sigma)$ is the second fundamental form of $\Sigma$.
	
	\begin{theorem}[Fraser-Schoen \cite{FS2}]\label{index}	If $\Sigma^k$ is a free boundary minimal surface in $B^n$ and $v\in \mathbb{R}^n$, then  
		\[
		\delta^2\Sigma(v^{\perp},v^{\perp})= -k\int_{\Sigma} |v^{\perp}|^2\, d\Sigma
		\]If $\Sigma$ is not contained in a cylinder $\Sigma_0\times \mathbb{R}$ where $\Sigma_0$ is a free boundary minimal surface, then $index(\Sigma)$ is at least $n$. In particular, if $k = 2$ and $\Sigma$ is not a
		plane disk, its index is at least n.
	\end{theorem}

Recall the $p-$th eigenvalue of $\delta^2\Sigma$ has the min-max characterization:
\[
\lambda_p= \inf_{\mathcal{W}\,:\, dim(\mathcal{W})=p} \,\,\sup_{X\in \mathcal{W}}\,\, \frac{\delta^2\Sigma(X,X)}{ \int_{\Sigma} |X|^2 d\Sigma}
\]

\begin{proof}[Proof of Theorem \ref{high dimension}]
\noindent	Arguing by contradiction, we assume that $\{\Sigma_i\}$  is a sequence of non-totally geodesic $k$-dimensional  free boundary minimal surfaces  in $B^n$ satisfying 
	\begin{eqnarray}\label{gap area}
	|\Sigma_i|\rightarrow  |D^k|
	\end{eqnarray}
	Following the strategy in \cite{K}, we  first show that (\ref{gap area}) implies curvature estimates for $\Sigma_i$.
	\begin{claim}\label{gap curvature estimates}
		Let $A_{\Sigma_i}$ be the second fundamental form of $\Sigma_i$. Then, there exists  $C>0$ such that
		\begin{eqnarray}
		\sup_{x \in\Sigma_i}\,|A_{\Sigma_i}(x)|\leq C.
		\end{eqnarray}
	\end{claim}
Let us show  first that the index estimate in  Theorem \ref{index} combined with  the Claim \ref{gap curvature estimates}  imply the theorem:

	By the Claim \ref{gap curvature estimates},   the second fundamental form of   $\{\Sigma_i\}$ is uniformly bounded. Theorem 6.1 in \cite{LZ} (see also \cite{ADN}) implies that $\Sigma_i$ converges smoothly up to the boundary to a  free boundary minimal surface $\Sigma_{\infty}$. Since  $|\Sigma_{\infty}|=|D^k|$, we conclude by   Theorem  \ref{brendle} that $\Sigma_{\infty}$ is an equatorial disk. In particular, $\Sigma_i$ is diffeomorphic to a disk and  has trivial normal bundle for $i$ large enough. By  Lemma \ref{index}, $\text{index}(\Sigma_i)\geq n$ unless $\Sigma_i$ is contained in the cylinder $\overline{\Sigma}_i\times \mathbb{R}$. By our assumptions $\overline{\Sigma}_i$ is a free boundary minimal surface smooth perturbation of an equatorial disk $\mathbb{D}^{k-1}$. This suggest applying an induction argument. Note that the first step corresponds to  two-dimensional minimal surface $\Sigma_i$ in $B^n$, and by Lemma \ref{index}, $index(\Sigma_i)$ is at least $n$ unless totally geodesic. The discussion for both the case $k=2$ and  the induction step $k=n+1$  are, hence, the same. Therefore, without loss of generality, it suffices assuming that $index(\Sigma_i)\geq n$ unless $\Sigma_i$ is an equatorial disk $\mathbb{D}^k$. This implies that  there exist at least $n$ mutually orthonormal eigenvector fields $X_j$ 
of the quadratic form $\delta^2\Sigma(\cdot,\cdot)$ defined on $\mathcal{X}^{\perp}(\Sigma_i)$ satisfying
\begin{eqnarray}\label{eigenvectors equation}\Delta^{\perp} X &+& \sum_{jl}\langle A(e_j,e_l),X\rangle A(e_j,e_l) + \lambda_X\,X = 0, \\ \nonumber\\ && (D_{\nu}X - D_{X}\nu)^{T\partial B^n}=\,0,\quad \text{and}\quad \lambda_X<0 \nonumber. \end{eqnarray}
Note that the vector fields $X$ are not necessarily equal to $v^{\perp}$ from Theorem \ref{index}. As $i\rightarrow \infty$, these eigenvectors converge to eigenvectors of the Jacobi operator on  $\Sigma_{\infty}\subset B^n$ and none of these eigenvector have eigenvalue zero since by the observation after Theorem \ref{index}, $\lambda_X<-k$. This is contradiction since $index(\Sigma_{\infty})=n-k$.
\end{proof}

	\begin{proof}[Proof of Claim \ref{gap curvature estimates}]
	Arguing by contradiction, we assume that
		\[\text{Area}(\Sigma_i)\rightarrow  |D^k|
		\quad \text{and}\quad \lambda_i=\sup_{x\in\Sigma_i}|A_i|^2(x) \rightarrow \infty.\]
		For each $i$ choose $x_i \in \Sigma_i$ with the property that $\sup_{\Sigma_i}|A_i|^2=|A_i|^2(x_i)$.
	Note that $\lim_{i\rightarrow \infty}|x_i|=1$. Indeed, the excess inequality (\ref{excess bordo}) implies that $\Sigma_i$ converges with multiplicity one  to $\mathbb{D}^k$  as a varifold. Hence, in $B^n(R)$, $0<R<1$, the surface $\Sigma_i$ satisfy $\theta(\Sigma_i,x,r)\leq 1 + \varepsilon$ for every $i$ large enough and $r$ small enough. If $\lim_{i\rightarrow \infty}|x_i|<1$, then we would get a contradiction with the smooth version of Allard's regularity theorem. 
	Now we consider the surface 
		\[\hat{\Sigma}_i= \lambda_i (\Sigma_i - x_i).\]
		One can check that $\hat{\Sigma}_i$ satisfies 
		\begin{equation}\label{curvature estimates 2}
		\sup_{x\in\hat{\Sigma}_i}|A|(x)\leq 1\quad \text{and}\quad |A_{\hat{\Sigma}_i}|(0)=1
		\end{equation}
		and it is a free boundary  minimal surface  in $\lambda_i(B_1^{n+1}(0)-x_i)$. 
		 It follows from Theorem 6.1 in \cite{LZ}(see also \cite{ADN}) that, after passing to a subsequence,  $\hat{\Sigma}_i$ converges smoothly and locally uniformly to $\Sigma_{\infty}$.  $\Sigma_{\infty}$  is  either  complete  without boundary minimal surface or it is a free boundary minimal surface in a half space.  Moreover, (\ref{curvature estimates 2})  implies that
		\begin{eqnarray}\label{curvature estimates 3}
		|A_{\Sigma_{\infty}}|(0)=1.
		\end{eqnarray}
		 On the other hand, by the scale invariance of the excess, we have that
		\begin{eqnarray}\label{excess interior}
		\int_{\Sigma_\infty}\frac{|z^{\perp}|^2}{|z|^{k+2}}\,d_{\Sigma_{\infty}}\leq \liminf_{i\rightarrow \infty} \int_{\Sigma_i} \frac{|z^{\perp}|^2}{|z|^{k+2}}\,d_{\Sigma_i}=\liminf_{i\rightarrow \infty} \int_{\Sigma_i} \frac{|(x-x_i)^{\perp}|^2}{|x-x_i|^{k+2}}\,d_{\Sigma_i} \nonumber
		\end{eqnarray}
	
		We want to prove that the last term above goes to zero as $i\rightarrow \infty$.

		\begin{claim}\label{claim3}	There exist $C>0$ such that for every $y \in \partial\Sigma_i$
					\begin{eqnarray*}
						|A_{\partial\Sigma_i}|(y)\leq C
					\end{eqnarray*}
				\end{claim}

				\begin{proof} 
					 Let $w_i\in  \partial\Sigma_i$ such that $\sup_{w\in\partial\Sigma_i}|A_{C_1\partial\Sigma_i}|(w)=|A_{C_1\partial\Sigma_i}(w_i)|=\beta_i$. Take the sequence $\widetilde{\Sigma}_i=\beta_i\Sigma_i^{\prime}$, where $\Sigma_i^{\prime}= (C_1\partial\Sigma_i-w_i)$. By the cone excess  (\ref{excess cone}) and the excess (\ref{excess bordo}):
					 \begin{eqnarray*}
					 \int_{\Sigma_i^{\prime}}\frac{|(x-w_i)^{\perp}|^2}{|x-w_i|^{k+2}} 
					  +  \int_{\Sigma_i^{\prime}} \langle H, \frac{x-w_i}{|x-w_i|^k}\rangle	=  
					    \int_{\Sigma_i}\frac{|(x-w_i)^{\perp}|^2}{|x-w_i|^{k+2}} \leq \frac{|\Sigma_i|- |D^k|}{2}
					 \end{eqnarray*}
				 By compactness, we obtain that $\widetilde{\Sigma}_i$ converge $C^{1,\alpha}$ to a submanifold $V_{\infty}$ which is free boundary on a hyperplane. By the scaling invariance, we obtain
				 \[
				 \int_{V_{\infty}} \frac{| Z^{\perp}|^2}{|Z|^{k+2}} + \int_{V_{\infty}} \langle \overrightarrow{H}, \frac{Z}{|Z|^{k}}\rangle=0
				 \]  
				 Applying a reflection symmetry  with respect to the hyperplane and the Classical Monotonicity formula for varifolds with bounded mean curvature, see  \cite[Theorem 2.1]{DL} we obtain that the density at the origin is $\lim_{r\rightarrow +\infty}\theta(V_{\infty},0,r)=1$. Therefore, $V_{\infty}$  is totally geodesic. This contradicts the choice of $w_i\in \Sigma_i$.
				\end{proof}
\noindent In particular, it follows from the Claim \ref{claim3} that the second fundamental form of $\partial\Sigma_i$ in $\mathbb{R}^{n+1}$ is uniformly bounded. Thus, up to subsequence,  $\partial \Sigma_i$ converges   in the $C^{1,\alpha}$ topology to  $\partial D^k \subset \mathbb{S}^{n-1}$. Equivalently, $C_1\partial\Sigma_i\cap \big(B^n-B_r(0)\big)$ converges in the $C^{1,\alpha}$ topology to $D^k$ .
			Without loss of generality, we can  assume that $x_i\notin C_1\partial\Sigma_i$ since $\Sigma_i$ cannot coincide with $C_1\partial \Sigma_i$ near $x_i$. 
	Applying (\ref{excess brendle}) in Proposition \ref{main result}, we obtain 
		\begin{eqnarray*}
				\int_{\Sigma_{\infty}} \frac{|z^{\perp}|^2}{|z|^{k+2}} \,d_{\Sigma_{\infty}}|&\leq&  \nonumber\\ 
				\nonumber \\
				 \lim_{i\rightarrow \infty} \int_{C_1\partial\Sigma_i} \frac{|(x-x_i)^{\perp}|^2}{|x-x_i|^{k+2}}\,d_{C\partial\Sigma_i} 
				 &+& \int_{C\partial\Sigma_i}\langle \overrightarrow{H}_{C_1\partial\Sigma_i}, \frac{x-x_i}{|x-x_i|^k}\rangle\,d_{C\partial\Sigma_i} - |D^k|. \nonumber
		\end{eqnarray*}

Now we explore that $C_1\partial \Sigma_i$ converges graphically to $D^k$ to prove the following claim:
			\begin{claim}\label{main claim}
			\begin{eqnarray*}\label{beta}
			 \lim_{i\rightarrow \infty}\bigg(\int_{C_1\partial\Sigma_i} \frac{|(x-x_i)^{\perp}|^2}{|x-x_i|^{k+2}} 
			+ \int_{C\partial\Sigma_i}\langle \overrightarrow{H}_{C_1\partial\Sigma_i}, \frac{x-x_i}{|x-x_i|^k}\rangle\bigg) \leq  |D^k|.
			\end{eqnarray*}
		\end{claim}
		\begin{proof}
Since $\Sigma_i$ converges weakly to $\mathbb{D}^k$, we    assume that $x_i\rightarrow y\in \partial \mathbb{D}^k$.		First note that 
			\begin{eqnarray*}
			\lim_{i\rightarrow \infty}	\int_{C_1\partial\Sigma_i-B_s(y)} \frac{|(x-x_i)^{\perp}|^2}{|x-x_i|^{k+2}} + \int_{C\partial\Sigma_i-B_s(y)}\langle \overrightarrow{H}_{C_1\partial\Sigma_i}, \frac{x-x_i}{|x-x_i|^k}\rangle =0
			\end{eqnarray*}
	since $C_1\partial\Sigma_i\rightarrow D^k$ in the $C^{1,\alpha}$ topology, here  $B_s(y)$ denotes  an    Euclidean ball.
	Hence, it is enough to focus on $\Sigma_i\cap B_s(y)$.  The convergence $C_1\partial\Sigma_i\rightarrow D^k$ also implies that we can choose $s<1$ very small so that $T_xC_1\partial\Sigma_i
	$ is uniformly close to $T_yD^k$ for every $x\in C_1\partial\Sigma_i\cap B_s(y)$. In particular, we can write $C_1\partial\Sigma_i$ as a graph over $T_{y}D^k$ and we have that $dvol_{C_1\partial \Sigma_i}= dvol_{D^k}(1+o_i(1))$.
		Let  $z_i\in T_xC_1\partial\Sigma_i$  a point which realizes the distance $r_i=d(T_xC_1\partial\Sigma_i,x_i)$.
		Hence,  $ |(x-x_i)^{\perp}|^2= r_i^2$ and $x-x_i= x-z_i + u_i$ for every $x \in C_1\partial\Sigma_i\cap B_s(x_i)$ where $u_i \perp T_{x}C_1\partial \Sigma_i$ and $|u_i|=r_i$. In particular, $u_i\perp(x-z_i)$. 
			Therefore,
			\begin{eqnarray*}
				\lim_{i\rightarrow\infty}\int_{C_1\partial\Sigma_i \cap B_s(y)} \frac{|(x- x_i)^{\perp}|^2}{|x-x_i|^{k+2}}  =
				\lim_{i\rightarrow \infty}\int_{C_1\partial\Sigma_i\cap B_s(y)} \frac{r_i^2}{|x-z_i +u_i|^{k+2}}=   \\ 
				\\
				\lim_{i\rightarrow \infty}\int_{C_1\partial\Sigma_i\cap B_s(y)-z_i}\frac{r_i^2 r_i^{-k-2}}{|\frac{x-z_i}{r_i} + \frac{u_i}{r_i}|^{k+2}}
		= \lim_{i\rightarrow \infty}\int_{\frac{1}{r_i}(C_1\partial\Sigma_i\cap B_s(y)-z_i)}\frac{1}{(|w|^2+1)^{\frac{k+2}{2}}}\\
		\\
					=
			\int_{P_1} \frac{1}{(|y|^2+1)^{\frac{k+2}{2}}}  	\leq 
			 \int_{\mathbb{R}^k} \frac{1}{(|y|^2+1)^{\frac{k+2}{2}}}
				= \int_{0}^{\infty}\int_{\partial D^k} \frac{s^{k-1}}{(s^2+1)^{\frac{k+2}{2}}} ds \\
				\\
				= |\partial D^k| \int_{0}^{\infty}\frac{s^{k-1}}{(s^2+1)^{\frac{k+2}{2}}}\, ds= \frac{|\partial D^k|}{k}\,=\, |D^k| ,
			\end{eqnarray*}
			where $P_1$ is either  $\mathbb{R}^k$ or a half space $\mathbb{R}_{a}^k=\{x\in \mathbb{R}^k\,:\,\langle x,e_k\rangle \leq a\}$. Similarly,
			\begin{eqnarray*}
				&&\lim_{i\rightarrow \infty} \int_{C_1\partial\Sigma_i\cap B_s(y)}
				\langle \overrightarrow{H}_{C_1\partial\Sigma_i}, \frac{(x-x_i)}{|x-x_i|^k}d_{C_1\partial\Sigma_i}\, \leq \\
				\\
				&&  \lim_{i\rightarrow \infty} \sup_{C_1\partial\Sigma_i\cap B_s(y)} \left|\left< \overrightarrow{H}_{C_1\partial\Sigma_i},\frac{(x-x_i)}{|x-x_i|}\right>\right| \int_{C_1\partial\Sigma_i\cap B_s(y)}\frac{1}{|x-x_i|^{k-1}}d_{C_1\partial\Sigma_i}
			\end{eqnarray*}
		\begin{eqnarray*}
			&=& \lim_{i\rightarrow \infty}  O(1-s)\int_{C_1\partial\Sigma_i\cap B_s(y)}\frac{1}{|x-z_i + u_i|^{k-1}}\, d_{C_1\partial\Sigma_i}  \\
			\\
			&=&\lim_{i\rightarrow \infty} O(1-s)\,\int_{C_1\partial \Sigma_i\cap B_s(y)-z_i} \frac{r_i^{1-k}}{|\frac{x-z_i}{r_i} + \frac{u_i}{r_i}|^{k-1}}\, d_{C_1\partial\Sigma_i} \\
			\\
			&=&\lim_{i\rightarrow \infty} O(1-s)\,\int_{\frac{1}{r_i}(T_{z_i}C\partial \Sigma_i\cap B_s(y)-z_i)} \frac{r_i}{|w+ v_i|^{k-1}}\,d\mathcal{H}^k  \quad \quad (|v_i|=1)\\
			\\
			&\leq& \lim_{i\rightarrow \infty} O(1-s) \int_{\mathbb{R}^k \cap B_{\frac{s}{r_i}}}\frac{r_i}{|w+ v_i|^{k-1}}\,dw \,(1+o_i(1)) \\
			\\
			&\leq& \lim_{i\rightarrow \infty} O(1-s) \int_{\mathbb{R}^k \cap B_{\frac{s}{r_i}}}\frac{r_i}{|w|^{k-1}}\, dw \,(1+o_i(1)) \\
		&\leq& \lim_{i\rightarrow \infty} O(1-s)\int_{0}^{\frac{s}{r_i}} r_i\,\frac{s^{k-1}}{s^{k-1}} ds\,(1+o_i(1)) =  O(s).
			\end{eqnarray*}
	We used that $u_i\perp (x-z_i)$ to obtain $|w|\leq |w+v_i|$ above.	Making $s\rightarrow 0$, we conclude the proof of the Claim \ref{main claim}.
		\end{proof}
\noindent Using the  Claim above, we obtain that
\begin{eqnarray*}
	\int_{\Sigma_{\infty}} \frac{|z^{\perp}|^2}{|z|^{k+2}} \,d_{\Sigma_{\infty}}\leq |D^k|-|D^k|=0.
\end{eqnarray*}
 Since this contradicts  (\ref{curvature estimates 3}), the Claim \ref{gap curvature estimates} is proved. 
\end{proof}

\begin{proof}[Proof of Proposition \ref{unique continuation}]
	Let $C_1\partial \Sigma$ be the free boundary cone over the boundary $\partial \Sigma$. Since $\partial \Sigma$ is minimal in $\mathbb{S}^{n-1}$, $C_1\partial \Sigma$  is a minimal submanifold in $\mathbb{R}^n$. Moreover, $\Sigma$ and $C_1\partial \Sigma$ coincide up to first order at $\partial \Sigma$. More precisely, if we write locally $\Sigma=graph(u)$ and $C_1\partial\Sigma=graph(v)$  near $x\in\partial\Sigma$, where $u,v: U\subset T_x\Sigma \rightarrow \mathbb{R}^{n-k}$, then  $w=v-u$ satisfies $w=\nabla w=0$ at $\partial \Sigma$. Moreover, $w$ satisfies, for each $l=1,\ldots,n-k$, the   second order linear elliptic system of  equations:\begin{eqnarray*}	\frac{a_{ij}(\nabla u)}{\sqrt{1+ |\nabla u_l|^2}}D_{ij}(w_l) + \sum_{m=1}^{p}b_j^m(\nabla u,\nabla v) D_{j} (w_m)=0,\end{eqnarray*}for some smooth functions $a_{ij}(\nabla u)$ and $b_j^m(\nabla u,\nabla v)$. By a result of Simon-Hardt \cite{HS} (see also \cite[Lemma 2.3]{BV} for the  generalization to systems of equations), $w$ vanishes at infinite order at $\partial\Sigma$. On the other hand,  \cite[Lemma 1]{L} and Morrey \cite[Theorem 6.7.6]{M}  imply that  the map $w$ is analytic. Therefore, $w\equiv 0$ and  $\Sigma=C_1\partial\Sigma$.\end{proof}

\end{document}